\documentclass{amsart}

\usepackage{fontspec}
\usepackage{booktabs}
\usepackage{hyperref}

\usepackage{graphicx}
\usepackage{tikz}
\usepackage{tabularx}

\newtheorem{theorem}{Theorem}[section]
\newtheorem{lemma}[theorem]{Lemma}

\theoremstyle{remark}
\newtheorem{remark}[theorem]{Remark}

\newcommand{\Zn}{\mathbb{Z}[n]}
\newcommand{\Z}{\mathbb{Z}}
\newcommand{\Q}{\mathbb{Q}}

\title{Purely cosmetic surgeries on knots with low-span Jones polynomials}
\author{Kazuhiro Ichihara}
\address{Department of Mathematics, College of Humanities and Sciences, Nihon University, 3-25-40 Sakurajosui, Setagaya-ku, Tokyo 156-8550, JAPAN}
\email{ichihara.kazuhiro@nihon-u.ac.jp}

\subjclass[2020]{Primary 57K10, Secondary 57K14, 57K30}

\keywords{cosmetic surgery, Jones polynomial, SymPy}

\thanks{This work was supported by JSPS KAKENHI Grant Number JP26K22260.}

\date{\today}

\begin{document}

\begin{abstract}
In this paper, we show that a knot with a nontrivial Jones polynomial of span at most 11 does not admit purely cosmetic surgery.
\end{abstract}

\maketitle

\section{Introduction}

Let $K$ be a nontrivial knot in the 3-sphere $S^3$.
If there is an orientation-preserving homeomorphism between the 3-manifolds obtained by Dehn surgery on $K$ along distinct slopes, such a pair of surgeries is said to be \emph{purely cosmetic}.
The Purely Cosmetic Surgery Conjecture states that no nontrivial knot in $S^3$ admits purely cosmetic surgery (\cite[Problem 1.12(c)]{K3List}).

The main result of this paper is as follows.

\begin{theorem}\label{thm:main}
If a knot $K$ in $S^3$ has a nontrivial Jones polynomial $V_K(t)$ and $\operatorname{span}(V_K(t)) \le 11$, then $K$ does not admit purely cosmetic surgery.
\end{theorem}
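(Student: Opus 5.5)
We plan to combine known obstructions to purely cosmetic surgery with a finite computer enumeration. The relevant obstructions are the following. Boyer--Lines show that if $K$ admits purely cosmetic surgery then $\Delta_K''(1)=0$, i.e.\ $a_2(K)=0$. Ichihara--Wu show that $V_K'''(1)=0$ is also necessary, which amounts to $a_2(K)=0$ together with the vanishing of the Casson--Walker type invariant $v_3(K)$. Ni--Wu, via Heegaard Floer theory, show that the two slopes must be $\pm r$ and that $\tau(K)=0$. Hanselman further restricts the pair to $\{\pm 2\}$ or $\{\pm 1/q\}$, and in the latter case gives a bound on $q$ in terms of genus. Finally, Ito and others give an additional constraint from the fourth-order coefficient of $V_K$ at $t=1$, or equivalently from $v_4$ (Ito's condition relating $a_4$ and $v_4$ in the $\pm 1/q$ case).

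The key observation is that all of these obstructions are conditions on the derivatives of $V_K(t)$ at $t=1$ (and $-1$). So if we could show that every Laurent polynomial which is a candidate Jones polynomial of span at most $11$ violates at least one of these conditions, we would be done.

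The plan is to parametrize the candidate polynomials as follows.
\begin{enumerate}
\item Write $V_K(t)=\sum_{i=m}^{m+s} c_i t^i$ with $s\le 11$.
\item Impose the standard properties of Jones polynomials of knots: $V_K(1)=1$, $V_K'(1)=0$, $V_K(\omega)=1$ for $\omega=e^{2\pi i/3}$, and $V_K(i)=\pm1$ (the Arf invariant condition).
\item Note that $V_K(t)-1$ is divisible by $(1-t)(1-t^3)$.
\item Therefore write $V_K(t)=1+(1-t)(1-t^3)W(t)$ with $W$ an integer Laurent polynomial of span at most $s-4\le 7$.
\end{enumerate}
Each necessary condition above is then a polynomial equation in the coefficients of $W$ and the shift $m$.

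The central condition is $V''_K(1)=-3\Delta''_K(1)=0$, together with $V'''_K(1)=0$. In terms of $W$, these become linear conditions: $W(1)=0$ together with a condition on $W'(1)$ involving $m$. The remaining conditions (the fourth-derivative condition of Ito type, and the mod-$2$/Arf constraints) cut down the space further.

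The main obstacle is that linear conditions leave infinitely many integer solutions, so no finite enumeration results directly. To finish we need one more inequality-type obstruction that is sensitive to the sizes of the coefficients. Our first attempt will be to use the Heegaard Floer or Casson-type constraint in Hanselman's refinement: for slopes $\pm1/q$, the invariant $v_4$ (or the relevant combination of $a_4$ and $v_3$) must vanish or satisfy a parity condition. We would combine this with the fact that span $\le 11$ bounds the crossing number of an adequate diagram, and more robustly with the bound $g(K)\le$ (something in terms of span). Such a genus bound would bound the degree of $\Delta_K$ and hence give a finite search space, which we would then verify with SymPy.

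If that does not suffice, the fallback is to use the known fact (via Khovanov homology detection and tabulations) that span is bounded below by crossing-number-like quantities for the relevant classes: alternating knots satisfy span $=c(K)$, and those are already handled by Ichihara--Ito--Saito. The nonalternating case would then be treated by solving the system of derivative conditions symbolically and showing that any solution forces $W=0$, i.e.\ $V_K=1$, contradicting nontriviality of $V_K$.
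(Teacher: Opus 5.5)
There is a genuine gap. The obstructions you list are too weak, and your step~4 hides the real difficulty. Write $V_K=t^rP(t)$ with $\deg P=s$. Then the span of $V_K-1$ is $\max(r+s,0)-\min(r,0)$. So $\operatorname{span}(W)\le s-4$ fails whenever $0$ is not among the exponents of $V_K$; for example $V_{8_{19}}=t^3+t^5-t^8$ gives $\operatorname{span}(V-1)=8$. The shift $r$ is unbounded and must be carried as an unknown.

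More seriously, everything you actually state amounts at most to $D(t)=(1-t)^5\Phi_3(t)\Phi_4(t)$ dividing $V_K(t)-1$. This covers Boyer--Lines, Ichihara--Wu, a fourth-order condition, $V(\omega)=1$, and $V(i)=1$ from $a_2=0$. Now $D$ has degree $9$, $D(0)=1$ and leading coefficient $-1$. Hence for every $r$, the remainder $P_r$ of $t^{-r}$ modulo $D$ is an integer polynomial of degree $\le 8$, and $t^rP_r$ satisfies all of these conditions. For instance, $1-(1-t)^5(1+t+t^2)(1+t^2)$ is a nontrivial solution of span $8$. So no finite check can succeed with your ingredients, and your proposed remedies do not change this:
\begin{itemize}
\item a genus bound gives no control over $r$ or over the coefficients of $V_K$ (Daemi--Lidman--Miller Eismeier in fact already give $g(K)=2$);
\item the alternating reduction leaves all non-alternating knots open;
\item ``show that any solution forces $W=0$'' is the statement itself.
\end{itemize}

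The missing idea is a further \emph{equality}, $V_K(\zeta)=1$ at a primitive fifth root of unity $\zeta$. Detcherry shows that either the slopes are $\pm1/(5k)$ or $V_K(\zeta)=1$. Daemi--Lidman--Miller Eismeier show that the slopes are $\pm2$, so the $\pm1/q$ case you invoke for Ito's condition is ruled out. The $\pm 2$ case, together with $\Delta_K=1$, is exactly what Brady feeds into Ito's formula to obtain $V_K''''(1)=0$. Adding $V_K(\zeta)=1$ raises the degree of the divisor to $13$, which is why span $\le 11$ works and span $12$ does not.

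Finiteness then comes not from an inequality but from the polynomial dependence of the derivative conditions on the shift. The paper's argument runs as follows:
\begin{itemize}
\item Write $r=60n+R$; the root-of-unity conditions then depend only on $R$.
\item Conditions (A1)--(A2) and (B)--(D) give ten linear equations in $c_0,\dots,c_N,n$.
\item For $N=11$, the remaining free variables $c_{10},c_{11}$ are fixed by (A3)--(A4), whose coefficient determinant is the constant $10800$.
\item Condition (A5) then becomes a nonzero quartic in $n$. Its integer roots are enumerated by the rational root theorem, and all of them give $f=1$.
\end{itemize}
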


There are infinitely many knots that satisfy the assumption of the theorem, that is, knots with a nontrivial Jones polynomial $V_K(t)$ whose $\operatorname{span}(V_K(t)) $ is at most 11.
For example, Kanenobu \cite[p.160]{Kanenobu} gives an infinite family of knots $K_{n,-n}$ with the same Jones polynomial of span 8. 
See Figure~\ref{fig:K2} for the knot $K_{2,-2}$.
The Jones polynomial of these knots is the following:
\[
V_{K_{n,-n}}(t)=
( t^2 -t + 1 - t^{-1} + t^{-2} )^2 
= t^{-4} - 2t^{-3} + 3t^{-2} - 4t^{-1} + 5 - 4t + 3t^2 - 2t^3 + t^4.
\]

\begin{figure}[htb]
    \centering
    \resizebox{0.3\textwidth}{!}{
\definecolor{linkcolor0}{rgb}{0, 0, 0}
\begin{tikzpicture}[line width=2.6, line cap=round, line join=round]
  \begin{scope}[color=linkcolor0]
    \draw (9.46, 5.80) .. controls (9.60, 6.12) and (9.60, 7.74) .. 
          (9.60, 8.84) .. controls (9.60, 10.73) and (7.17, 11.38) .. 
          (4.87, 11.40) .. controls (2.59, 11.43) and (0.14, 10.91) .. 
          (0.14, 8.95) .. controls (0.14, 7.90) and (0.14, 6.44) .. (0.49, 5.91);
    \draw (0.73, 5.57) .. controls (0.95, 5.24) and (1.50, 5.38) .. (1.66, 5.85);
    \draw (1.66, 5.85) .. controls (1.88, 6.51) and (1.85, 7.46) .. 
          (1.82, 8.27) .. controls (1.79, 9.16) and (2.16, 10.05) .. 
          (2.93, 10.06) .. controls (3.49, 10.07) and (4.10, 10.04) .. 
          (4.09, 9.57) .. controls (4.08, 9.23) and (4.33, 8.96) .. (4.65, 8.84);
    \draw (5.04, 8.69) .. controls (5.27, 8.61) and (5.25, 8.25) .. (4.97, 8.16);
    \draw (4.97, 8.16) .. controls (4.61, 8.05) and (4.51, 7.74) .. (4.69, 7.68);
    \draw (5.09, 7.55) .. controls (5.30, 7.48) and (5.20, 7.12) .. (4.80, 7.00);
    \draw (4.80, 7.00) .. controls (4.35, 6.86) and (4.10, 6.40) .. (4.09, 5.92);
    \draw (4.09, 5.51) .. controls (4.09, 4.98) and (4.06, 4.40) .. (3.65, 4.06);
    \draw (3.65, 4.06) .. controls (3.35, 3.81) and (2.93, 3.72) .. 
          (2.61, 3.94) .. controls (2.33, 4.14) and (2.15, 4.47) .. 
          (2.19, 4.81) .. controls (2.23, 5.16) and (2.07, 5.49) .. (1.81, 5.72);
    \draw (1.50, 5.99) .. controls (1.23, 6.22) and (0.82, 6.07) .. (0.61, 5.74);
    \draw (0.61, 5.74) .. controls (0.16, 5.04) and (0.24, 3.67) .. 
          (0.30, 2.58) .. controls (0.37, 1.28) and (1.25, 0.13) .. 
          (2.48, 0.16) .. controls (3.37, 0.18) and (4.52, 0.21) .. 
          (4.49, 0.64) .. controls (4.48, 0.92) and (4.67, 1.16) .. (4.94, 1.26);
    \draw (5.32, 1.41) .. controls (5.53, 1.49) and (5.49, 1.85) .. (5.18, 1.91);
    \draw (5.18, 1.91) .. controls (4.83, 1.98) and (4.75, 2.32) .. (4.94, 2.38);
    \draw (5.32, 2.53) .. controls (5.49, 2.59) and (5.41, 2.89) .. (5.08, 2.95);
    \draw (5.08, 2.95) .. controls (4.71, 3.03) and (4.26, 3.12) .. 
          (4.25, 3.43) .. controls (4.24, 3.67) and (4.05, 3.84) .. (3.83, 3.96);
    \draw (3.47, 4.16) .. controls (3.21, 4.31) and (3.18, 4.68) .. 
          (3.16, 5.01) .. controls (3.13, 5.44) and (3.60, 5.71) .. (4.09, 5.71);
    \draw (4.09, 5.71) .. controls (4.69, 5.72) and (5.28, 5.72) .. (5.88, 5.72);
    \draw (5.88, 5.72) .. controls (6.40, 5.73) and (6.93, 5.52) .. 
          (6.95, 5.05) .. controls (6.96, 4.68) and (6.82, 4.31) .. (6.49, 4.15);
    \draw (6.11, 3.97) .. controls (5.88, 3.86) and (5.62, 3.71) .. 
          (5.70, 3.48) .. controls (5.77, 3.27) and (5.51, 3.13) .. (5.27, 3.03);
    \draw (4.89, 2.88) .. controls (4.72, 2.81) and (4.82, 2.54) .. (5.13, 2.46);
    \draw (5.13, 2.46) .. controls (5.48, 2.37) and (5.56, 2.05) .. (5.37, 1.98);
    \draw (4.99, 1.84) .. controls (4.79, 1.77) and (4.83, 1.44) .. (5.13, 1.33);
    \draw (5.13, 1.33) .. controls (5.55, 1.19) and (6.03, 1.03) .. 
          (6.03, 0.62) .. controls (6.03, 0.21) and (7.09, 0.19) .. 
          (7.91, 0.17) .. controls (9.06, 0.15) and (9.79, 1.32) .. 
          (9.80, 2.57) .. controls (9.80, 3.62) and (9.81, 5.00) .. (9.37, 5.62);
    \draw (9.37, 5.62) .. controls (9.14, 5.94) and (8.67, 5.96) .. (8.44, 5.65);
    \draw (8.20, 5.32) .. controls (8.07, 5.14) and (8.01, 4.93) .. 
          (8.02, 4.71) .. controls (8.05, 4.38) and (7.83, 4.10) .. 
          (7.63, 3.84) .. controls (7.33, 3.45) and (6.75, 3.69) .. (6.30, 4.06);
    \draw (6.30, 4.06) .. controls (5.89, 4.40) and (5.88, 4.98) .. (5.88, 5.52);
    \draw (5.87, 5.93) .. controls (5.87, 6.40) and (5.40, 6.66) .. (4.98, 6.90);
    \draw (4.62, 7.10) .. controls (4.44, 7.21) and (4.57, 7.50) .. (4.89, 7.61);
    \draw (4.89, 7.61) .. controls (5.25, 7.74) and (5.36, 8.05) .. (5.16, 8.11);
    \draw (4.77, 8.22) .. controls (4.55, 8.28) and (4.56, 8.63) .. (4.85, 8.76);
    \draw (4.85, 8.76) .. controls (5.27, 8.97) and (5.75, 9.20) .. 
          (5.74, 9.65) .. controls (5.73, 10.09) and (6.32, 10.10) .. 
          (6.83, 10.09) .. controls (7.70, 10.07) and (7.93, 9.00) .. 
          (7.93, 8.00) .. controls (7.93, 7.12) and (7.93, 5.94) .. (8.32, 5.49);
    \draw (8.32, 5.49) .. controls (8.62, 5.14) and (9.12, 5.09) .. (9.28, 5.43);
  \end{scope}
\end{tikzpicture}}
    \caption{Kanenobu knot $K_{2,-2}$}
    \label{fig:K2}
\end{figure}

In the proof of the theorem, we use known necessary conditions on the Jones polynomial for a knot to have purely cosmetic surgery.
In the range of span at most 11, the only Jones polynomial that satisfies these conditions is the trivial one $V_K(t) = 1$, but for span 12 there are nontrivial solutions that satisfy the conditions.
We describe this result in Section \ref{sec:span12}.

The paper is organized as follows.
In Section~\ref{sec:poly-list}, we review basic evaluation conditions that the Jones polynomial of a knot satisfies, and the classification for span at most 5 by Kanenobu--Kishimoto--Sumi \cite{KanenobuKishimotoSumi}.
In Section~\ref{sec:small-span}, we combine these with the conditions that come from the existence of purely cosmetic surgery to prove Theorem~\ref{thm:main} by hand in the case of span at most 5.
In Sections~\ref{sec:sympy-rigor} and~\ref{sec:algorithm}, we set up a general framework (Theorem~\ref{thm:poly}) for span 6 or more, and then run an exhaustive search with SymPy \cite{Meurer2017} for spans from 6 to 11 to complete the proof of Theorem~\ref{thm:main}.
Finally, in Section~\ref{sec:span12}, we see that for span $12$ there actually exist nontrivial polynomials that satisfy the conditions. 

\section{Laurent polynomials that can be Jones polynomials}\label{sec:poly-list}

The Jones polynomial $V(t) \in \Z[t^{\pm 1}]$ of a knot $K \subset S^3$ satisfies the following relations at specific points $t$ (see \cite[Section 2]{KanenobuKishimotoSumi}):
\begin{align}
V(1) &= 1, \label{eq:j1} \\
V'(1) &= 0, \label{eq:j2} \\
V(e^{2\pi i / 3}) &= 1, \label{eq:j3} \\
V(i) &= \pm 1. \label{eq:j4}
\end{align}
Here $V'(1)$ is the value of the first derivative at $t=1$.
In \cite{KanenobuKishimotoSumi}, further conditions, at $t=e^{\pi i/3}$ and $t=-1$, are also used, but we omit them since we do not use them in this paper.

For a Laurent polynomial $f(t)$, the \emph{span} of $f(t)$ is the difference between the largest and the smallest exponents of its nonzero terms.
According to Kanenobu--Kishimoto--Sumi \cite[Theorem 1.1]{KanenobuKishimotoSumi}, if the Jones polynomial $f(t)=V_K(t)$ of a knot $K$ is written as $f(t) = t^r P(t)$ with $r \in \Z$ and $P(t) = \sum_{k=0}^N c_k t^k \in \Z[t]$, and it is nontrivial with span at most 5, then it is one of the following.
Note that the span of $f(t)$ is at most the degree of $P(t)$, and each $P(t)$ below satisfies $c_0 \ne 0$, so the span of $f(t)$ is equal to the degree of $P(t)$.

When the span is at most 4, a nontrivial candidate is one of the following (to be precise, there is also $f_2(t^{-1})$; note that $f_3(t^{-1})=f_3(t)$):
\[
f_2(t)=t(1+t^2-t^3), \qquad 
f_3(t)=t^{-2}(1-t+t^2-t^3+t^4).
\]
When the span is 5, a nontrivial candidate is one of the following five, or $f_k(t^{-1})$ for one of them:
\begin{align*}
f_4(t) &= t^2 ( 1 + t^2 - t^3 + t^4 - t^5 ) 
    = t^2 + t^4 - t^5 + t^6 - t^7, \\
f_5(t) &= t ( 1 - t + 2t^2 - t^3 + t^4 - t^5 ) 
    = t - t^2 + 2t^3 - t^4 + t^5 - t^6, \\
f_6(t) &= t^3 ( 1 + t^2 - t^5) 
    = t^3 + t^5 - t^8, \\
f_7(t) &= t^4( 2 - t + 2t^2 - 2t^3 + t^4 - t^5 ) 
    = 2t^4 - t^5 + 2t^6 - 2t^7 + t^8 - t^9, \\
f_8(t) &= t^8 ( 2 + 2t^2 - 2t^3 + t^4 - 2t^5)
    = 2t^8 + 2t^{10} - 2t^{11} + t^{12} - 2t^{13}.
\end{align*}

The list for the case where $f(t)$ has span 6 is also given in \cite{KanenobuKishimotoSumi}, but
the detailed computation is omitted there, so in the later sections we treat the spans from 6 to 11.

\section{Jones polynomials and purely cosmetic surgery}\label{sec:small-span}

Suppose that a knot $K \subset S^3$ admits purely cosmetic surgery.
Then the Jones polynomial $f(t) = V_K(t)$ of $K$ must satisfy the following algebraic conditions:
\begin{align}
f''(1) &= f'''(1) = f''''(1)=0, \label{eq:cs1} \\
f(\zeta) &= 1 \quad (\zeta \text{ is a primitive 5th root of unity}). \label{eq:cs2}
\end{align}
The condition $f''(1)=0$ is based on Boyer--Lines \cite{BoyerLines}, $f'''(1)=0$ on Ichihara--Wu \cite{IchiharaWu}, $f''''(1)=0$ on Ito \cite{Ito} and Brady \cite{Brady}, and $f(\zeta)=1$ on Detcherry \cite{Detcherry} and Daemi--Lidman--Miller Eismeier \cite{DLME}.
In fact, Detcherry \cite[Theorem 1.4]{Detcherry} shows that either the slopes of a purely cosmetic surgery are of the form $\pm 1/(5k)$, or $f(\zeta)=1$ holds. On the other hand, by Daemi--Lidman--Miller Eismeier \cite[Corollary 1.4]{DLME}, the slopes of a purely cosmetic surgery are $\pm 2$, the genus of $K$ is $2$, and the Alexander polynomial is trivial.
Since $\pm 2$ is not of the form $\pm 1/(5k)$, we have $f(\zeta)=1$.
Also, Brady \cite{Brady} obtains $f''''(1)=0$ by putting these conditions on the slopes and the Alexander polynomial into the relation of Ito \cite{Ito}.

Moreover, the following lemma holds.

\begin{lemma}\label{lem:arf_i}
If a knot $K$ admits purely cosmetic surgery, then its Jones polynomial $f(t)=V_K(t)$ satisfies $f(i) = 1$.
\end{lemma}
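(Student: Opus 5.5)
We use the classical identity $V_K(i) = (-1)^{\operatorname{Arf}(K)}$, due to Murakami (and to Jones via Lickorish--Millett). The lemma then reduces to showing that a knot admitting purely cosmetic surgery has vanishing Arf invariant. This is the sign that \eqref{eq:j4} leaves undetermined.

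The Arf invariant is determined by the Alexander polynomial: $\operatorname{Arf}(K)=0$ if and only if $\Delta_K(-1)\equiv \pm 1 \pmod 8$. Equivalently, $\operatorname{Arf}(K)\equiv a_2(K) \pmod 2$, where $a_2$ is the second coefficient of the Conway polynomial, so $a_2 = \tfrac12\Delta_K''(1)$. The quickest route is the result of Daemi--Lidman--Miller Eismeier quoted above. If $K$ admits purely cosmetic surgery, then $\Delta_K(t)=1$. Hence $\Delta_K(-1)=1$, so $\operatorname{Arf}(K)=0$ and $f(i)=V_K(i)=1$.

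There is also an independent route avoiding the strong DLME input. By Boyer--Lines, purely cosmetic surgery forces $\Delta_K''(1)=0$, so $a_2(K)=0$. Since $\operatorname{Arf}(K)\equiv a_2(K)\pmod 2$, again $\operatorname{Arf}(K)=0$. One can also see this within the paper's framework: $f''(1)=0$ from \eqref{eq:cs1}, together with the known relation $V_K''(1)=-3a_2(K)$ (using $V_K'(1)=0$), gives $a_2=0$.

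The main point requiring care is citing the identity $V_K(i)=(-1)^{\operatorname{Arf}(K)}$ with the correct normalization of $t^{1/2}$. For knots there is no ambiguity, since $V_K\in\Z[t^{\pm1}]$. We also need the relation between $\operatorname{Arf}(K)$ and $a_2$ (Levine/Murasugi). Both are standard, so the proof is a short chain of cited facts, and no step should be a real obstacle.
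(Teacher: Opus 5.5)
Your proof is correct, and your primary argument takes a different route from the paper's. You use the Daemi--Lidman--Miller Eismeier conclusion $\Delta_K(t)=1$ together with Levine's criterion ($\operatorname{Arf}(K)=0$ if and only if $\Delta_K(-1)\equiv\pm1 \pmod 8$). The paper instead uses only $f''(1)=0$ from \eqref{eq:cs1} (Boyer--Lines), the identity $a_2(K)=-V_K''(1)/6$, and $\operatorname{Arf}(K)\equiv a_2(K)\pmod 2$. That is precisely your ``independent route,'' so your fallback argument is the paper's proof.

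Your DLME route is shorter but leans on a much deeper theorem. Note that $\Delta_K=1$ already forces $a_2=0$, so it also makes $f''(1)=0$ automatic. The paper's route needs only the Boyer--Lines condition, so it actually shows more: any knot whose Jones polynomial satisfies $V_K''(1)=0$ has $V_K(i)=1$, independently of DLME. In context both routes are legitimate, since the paper invokes DLME anyway to obtain condition (D).

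There is one small slip in your second route. The correct relation is $V_K''(1)=-6a_2(K)$, not $-3a_2(K)$. For the trefoil, $V=t+t^3-t^4$ gives $V''(1)=-6$ while $a_2=1$. You have conflated $a_2$ with $\Delta_K''(1)=2a_2$; the true identity is $V_K''(1)=-3\Delta_K''(1)$. The slip is harmless here, because only the vanishing of $a_2$ is used.
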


\begin{proof}
The evaluation \eqref{eq:j4} follows from $V(i) = (-1)^{\operatorname{Arf}(K)}$, where $\operatorname{Arf}(K)$ is the Arf invariant of the knot $K$.
It is known that $\operatorname{Arf}(K) \equiv a_2(K) \pmod 2$ and $a_2(K) = -V''(1)/6$, where $a_2(K)$ is the second coefficient of the Conway polynomial. 
See, for example, \cite{KanenobuKishimotoSumi}.
Since $K$ admits purely cosmetic surgery, $a_2(K) = -f''(1)/6 = 0$ by \eqref{eq:cs1}.
Therefore $\operatorname{Arf}(K) \equiv a_2(K) \equiv 0 \pmod 2$, and we get $f(i) = (-1)^{\operatorname{Arf}(K)} = (-1)^0 = 1$.
\end{proof}

We now evaluate these conditions~\eqref{eq:cs1},~\eqref{eq:cs2} for the candidates given in Section~\ref{sec:poly-list}.
For the nontrivial Jones polynomial candidates of span at most 5 (the seven polynomials $f_2, \dots, f_8$), the values $f(\zeta)$ at a primitive 5th root of unity $\zeta$ are as shown in Table~\ref{tab:zeta_evaluation}.

\begin{table}[htbp]
  \centering
  \caption{Evaluation by substitution at a primitive 5th root of unity $\zeta$ for each Laurent polynomial ($f(\zeta)$)}
  \label{tab:zeta_evaluation}
  \vspace{0.5em}
  \begin{tabular}{c c c}
    \toprule
    $f_k$ & Polynomial $f(t)$ & $f(\zeta)$ \\
    \midrule
    $f_2$ & $t(1 + t^2 - t^3)$ & $1 + 2\zeta + \zeta^2 + 2\zeta^3$ \\
    $f_3$ & $t^{-2}(1 - t + t^2 - t^3 + t^4)$ & $2 + 2\zeta^2 + 2\zeta^3$ \\
    \midrule
    $f_4$ & $t^2 + t^4 - t^5 + t^6 - t^7$ & $-2 - \zeta^2 - \zeta^3$ \\
    $f_5$ & $t - t^2 + 2t^3 - t^4 + t^5 - t^6$ & $2 + \zeta + 3\zeta^3$ \\
    $f_6$ & $t^3 + t^5 - t^8$ & $1$ \\
    $f_7$ & $2t^4 - t^5 + 2t^6 - 2t^7 + t^8 - t^9$ & $-2 + \zeta - 3\zeta^2$ \\
    $f_8$ & $2t^8 + 2t^{10} - 2t^{11} + t^{12} - 2t^{13}$ & $2 - 2\zeta + \zeta^2$ \\
    \bottomrule
  \end{tabular}
\end{table}

Here, the minimal polynomial of a primitive 5th root of unity $\zeta$ is
\[
\Phi_5(t) = t^4 + t^3 + t^2 + t + 1,
\]
and $\{1, \zeta, \zeta^2, \zeta^3\}$ is a basis of the algebraic number field $\Q(\zeta)$ over $\Q$.
Therefore, by Table~\ref{tab:zeta_evaluation}, the only one that satisfies $f(\zeta) = 1$ is $f_6(t) = t^3 + t^5 - t^8$ (the Jones polynomial of the knot $8_{19}$).
Note that for $f_k(t^{-1})$, which corresponds to the mirror image of the knot ($t \mapsto t^{-1}$), we get the same results:
since $f_k$ has integer coefficients and $\zeta^{-1}=\zeta^4$ is a conjugate of $\zeta$ over $\Q$, $f_k(\zeta^{-1})=1$ holds if and only if $f_k(\zeta)=1$;
and since $f_k'(1)=0$, we have $f_k''(1) = (f_k(t^{-1}))''|_{t=1}$.

For $f_6(t) = t^3 + t^5 - t^8$, computing the value of the second derivative at $t=1$ gives
\[
f_6''(1) = \left.(6t + 20t^3 - 56t^6)\right|_{t=1} = 6 + 20 - 56 = -30 \neq 0,
\]
and so it does not satisfy the condition $f''(1) = 0$.

Therefore, in the range $\operatorname{span}(f(t)) \le 5$, there is no nontrivial Jones polynomial that a knot admitting purely cosmetic surgery can have.
This proves Theorem~\ref{thm:main} in the case where $\operatorname{span}(V_K(t)) \le 5$.

\section{Setting of the problem and ensuring rigor with a computer algebra system}\label{sec:sympy-rigor}

As stated in the previous section, the Jones polynomial of a knot admitting purely cosmetic surgery is subject to the condition $f(i) = 1$ from Lemma~\ref{lem:arf_i}, and the condition \eqref{eq:cs2} $f(\zeta) = 1$ from Detcherry \cite{Detcherry} and Daemi--Lidman--Miller Eismeier \cite{DLME}, in addition to the conditions \eqref{eq:j1}, \eqref{eq:j2}, \eqref{eq:j3} and \eqref{eq:cs1}. 

From here on, for the Laurent polynomial with integer coefficients
$f(t) = t^r P(t)$, where $r \in \Z$ and $P(t) = \sum_{k=0}^N c_k t^k \in \Z[t]$, we study the solution space when these conditions are imposed. 
Based on exact symbolic computation with the computer algebra system SymPy \cite{Meurer2017}, we check rigorously that every Laurent polynomial with integer coefficients and span at most 11 that satisfies these conditions is equal to the trivial solution $f(t)=1$.

Our goal is to prove the following theorem. 

\begin{theorem}\label{thm:poly}
Suppose that a Laurent polynomial
\[f(t)=t^rP(t) \quad \text{ with } r \in \Z , \ P(t)=\sum_{k=0}^N c_k t^k\in\Z[t] \]
satisfies the following value and derivative conditions at $1$:
\begin{align*}
  \text{(A1)} \quad & f(1)      = 1, \\
  \text{(A2)} \quad & f'(1)     = 0, \\
  \text{(A3)} \quad & f''(1)    = 0, \\
  \text{(A4)} \quad & f'''(1)   = 0, \\
  \text{(A5)} \quad & f''''(1)  = 0,
\end{align*}
and the following conditions at roots of unity:
\begin{align*}
  \text{(B)}  \quad & f(i)      = 1
  \qquad (i \text{ is a primitive 4th root of unity}), \\
  \text{(C)}  \quad & f(\omega) = 1
  \qquad (\omega \text{ is a primitive 3rd root of unity}), \\
  \text{(D)}  \quad & f(\zeta)  = 1
  \qquad (\zeta \text{ is a primitive 5th root of unity}).
\end{align*}
Then, if $\operatorname{span}(f)\le 11$, we have $f(t)=1$.
\end{theorem}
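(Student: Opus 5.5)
The plan is to turn all the conditions into a single congruence in $\Q[t^{\pm1}]$ and reduce the statement to the vanishing of one coefficient. Let
\[
Q(t)=(t-1)^5\,\Phi_4(t)\,\Phi_3(t)\,\Phi_5(t)=(t-1)^5(t^2+1)(t^2+t+1)(t^4+t^3+t^2+t+1),
\]
a monic polynomial of degree $5+2+2+4=13$ with integer coefficients.

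\textbf{Step 1: the conditions are one congruence.} Conditions (A1)--(A5) say that $f-1$ vanishes to order at least $5$ at $t=1$. Multiplying $f-1$ by a power of $t$ does not change this order, because $t$ is a unit near $1$. Condition (B) says $f(i)=1$. Since $f$ has integer coefficients, complex conjugation gives $f(-i)=1$ as well. In the same way, (C) and (D) give $f-1$ vanishing at every root of $\Phi_3$ and of $\Phi_5$, using that all Galois conjugates of $\omega$, respectively $\zeta$, appear.

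The factors of $Q$ are pairwise coprime. Hence the conditions say exactly that $t^m(f(t)-1)$ is divisible by $Q$ for $m\gg0$. Equivalently, $f\equiv1 \pmod Q$ in $\Z[t^{\pm1}]$.

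Since $Q(0)=-1$, the variable $t$ is a unit modulo $Q$ over $\Z$, so $t^{-r}$ makes sense there. Writing $f=t^rP$, the congruence becomes
\[
P(t)\equiv t^{-r}\pmod Q .
\]

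\textbf{Step 2: identify $P$ as a remainder.} Put $n=-r$ and let $R_n(t)$ denote the remainder of $t^n$ upon division by $Q$. Because $Q$ is monic, $R_n$ lies in $\Z[t]$ and has degree at most $12$. If $\deg P=N\le 11$, uniqueness of the remainder forces $P=R_n$. So $P=R_n$ and $R_n$ has vanishing $t^{12}$-coefficient.

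For $0\le n\le 11$ we have $R_n=t^n$, so $f=t^{-n}t^n=1$, which is the trivial solution. Let $a(n)$ denote the $t^{12}$-coefficient of $R_n$. It therefore suffices to prove
\[
a(n)\neq0 \quad\text{for all } n\in\Z\setminus\{0,\dots,11\}.
\]

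\textbf{Step 3: $a(n)$ is a quasi-polynomial.} The plan is to compute $a(n)$ from the Chinese remainder theorem over $\Q$:
\[
\Q[t]/(Q)\cong \Q[t]/((t-1)^5)\times \Q[t]/(\Phi_4)\times\Q[t]/(\Phi_3)\times\Q[t]/(\Phi_5).
\]
Modulo $(t-1)^5$, write $t^n=(1+(t-1))^n$. Its image is $\sum_{j\le4}\binom{n}{j}(t-1)^j$, whose coefficients are polynomials in $n$ of degree at most $4$. In the other three factors, $t^n$ depends only on $n \bmod 4$, $n\bmod 3$ and $n\bmod 5$ respectively.

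Lifting back through the (fixed, rational) CRT idempotents shows
\[
a(n)=p(n)+q(n),
\]
where $p$ is a fixed rational polynomial of degree at most $4$ and $q$ is periodic with period $60$. Thus on each residue class $n\equiv c\pmod{60}$, $a(n)$ is a single polynomial $A_c(n)\in\Q[n]$ of degree at most $4$.

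\textbf{Step 4: the finite check.} For each of the $60$ classes $c$, I would compute $A_c$ exactly in SymPy, using rational arithmetic. Then I would find all integer roots of $A_c$ lying in the class $c$ modulo $60$. This is a finite, rigorous computation: use the rational root theorem on the cleared numerator, or a bound on the roots followed by direct evaluation.

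I expect the only roots to be $n\in\{0,\dots,11\}$. Here $a(n)=0$ trivially, while $a(12)=1$, which serves as a consistency check. Any class on which $A_c$ vanishes identically would have to be ruled out separately. This does not happen, since the leading part $p(n)$ has a nonzero $n^4$ term coming from $\binom n4$ with a nonzero idempotent coefficient.

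\textbf{Main obstacle.} The main work is making Step 3 explicit and exact: obtaining the idempotents and the resulting quartics $A_c$ without error, then certifying that none has an extraneous integer root in its class. A useful sanity check is that the same machinery must produce genuine nontrivial solutions once the span bound $11$ is replaced by $12$, where only vanishing of the $t^{13}$-coefficient is needed, consistent with the paper's Section~\ref{sec:span12}.
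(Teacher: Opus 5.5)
Your proposal is correct, and it takes a more structural route than the paper.

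The paper keeps $r=60n+R$. For each bound $N=6,\dots,11$ and each of the $60$ classes, it solves the ten linear equations from (A1), (A2), (B), (C), (D) in $c_0,\dots,c_N,n$. It then uses (A3)--(A5) one at a time to eliminate the remaining free variables until a quartic $g_4(n)$ is left, checking ranks, determinants and integrality of the $c_k(n)$ at run time. You instead observe that the eight conditions say exactly $f\equiv1\pmod Q$ with $\deg Q=13$. So $P$ is forced to be the remainder $R_{-r}$, and everything collapses to the single coefficient $a(n)=[t^{12}]R_n$.

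This explains what the paper observes empirically:
\begin{itemize}
\item Integrality is automatic, since $Q$ is monic with $Q(0)=-1$.
\item The $n^4$-coefficient of each quartic is $\frac1{24}\cdot\frac1{M(1)}=\frac1{720}$, where $M=\Phi_3\Phi_4\Phi_5$. This confirms your unproved claim that no class degenerates.
\item The $12$ trivial solutions the paper finds at $N=11$ are your $n=0,\dots,11$.
\item At span $12$ there is no condition at all, since $\deg R_n\le 12$ always; your ``$t^{13}$-coefficient'' condition is therefore vacuous. The paper's example is exactly $f=1+Q$, i.e.\ $n=-1$.
\end{itemize}

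Your reduction also removes the need for the computer, and so settles the ``I expect'' of Step~4. The recurrence defined by $Q$, together with $a(0)=\dots=a(11)=0$ and $a(12)=1$, gives $\sum_{n\ge0}a(n)x^n=x^{12}/\bigl(x^{13}Q(1/x)\bigr)$. Moreover $x^{13}Q(1/x)=(1-x)^5\Phi_4(x)\Phi_3(x)\Phi_5(x)=(1-x)^2(1-x^3)(1-x^4)(1-x^5)/(1+x)$. Hence for $n\ge12$, $a(n)$ is the coefficient of $x^{n-12}$ in $(1+x)/\bigl((1-x)^2(1-x^3)(1-x^4)(1-x^5)\bigr)$, which is at least $1$.

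For negative $n$, use $t^{13}Q(1/t)=-Q(t)$: it implies that $n\mapsto a(11-n)$ satisfies the same two-sided recurrence as $a$. The two sequences agree for $0\le n\le12$; note $t^{-1}\equiv(1+Q)/t$, so $a(-1)=1$. Therefore $a(n)=a(11-n)\ge1$ for $n\le-1$ as well.

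What the paper's approach buys in exchange is a uniform mechanical search that tracks how the solution set changes with $N$.
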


Note that any Laurent polynomial $f(t)$ with $\operatorname{span}(f)=s$ can be written as $f(t)=t^rP(t)$ with $\deg P=s$ and $c_0\ne0$.
Hence the search with the degree bound $N$ covers all Laurent polynomials with span at most $N$.
In particular, the search for $N=11$ alone covers all Laurent polynomials with span at most $11$.
In this paper, we carry out the search for each $N=6,\dots,11$ to see how the structure of the solutions changes with $N$.

\subsection{Target Laurent polynomials and search conditions}

In this subsection, we give some preparations for the proof of Theorem~\ref{thm:poly}.

We consider the Laurent polynomial
\[
f(t) = t^r P(t)
\]
given by $r \in \Z$ and the polynomial with integer coefficients
\[
P(t) = c_0 + c_1 t + \cdots + c_N t^N \in \Z[t].
\]
Here $N$ is the upper bound on the degree allowed for the polynomial $P(t)$,
and this includes the case where the actual degree of $P(t)$ is less than $N$.
Therefore, $N$ gives an upper bound on the span of $f(t)$.

\subsubsection{Conditions at roots of unity}
Through algebraic relations in the quotient ring
\[
\Z[t]/(\Phi_m(t)),
\]
where $\Phi_m(t)$ denotes the $m$-th cyclotomic polynomial, the root-of-unity conditions (B),(C),(D) are reduced to a system of linear equations in $c_0,\dots,c_N$ as follows.
First, since $\Phi_m(t)$ is the minimal polynomial of a primitive $m$-th root of unity $\zeta_m$ over $\Q$,
the map $t\mapsto\zeta_m$ induces an isomorphism $\Z[t]/(\Phi_m(t))\cong\Z[\zeta_m]$.
Hence $\Z[\zeta_m]$ is a free module of rank $\varphi(m)$ over $\Z$ with basis
$\{1,\zeta_m,\dots,\zeta_m^{\varphi(m)-1}\}$, where $\varphi$ is Euler's totient function.
Since $\zeta_m^m=1$, the condition $f(\zeta_m)=1$ is equivalent to
\[
P(\zeta_m)=\zeta_m^{-r},
\]
whose right-hand side depends only on $r \bmod m$.
Reducing both sides of this equality in this basis by $\Phi_m(\zeta_m)=0$ and comparing
the coefficients, we obtain exactly $\varphi(m)$ linear equations in $c_0,\dots,c_N$
with integer coefficients, for each residue class of $r$ modulo $m$.
Note that this condition does not depend on the choice of the primitive $m$-th root of unity $\zeta_m$:
since $f$ has integer coefficients, if $f(\zeta_m)=1$ for one primitive $m$-th root of unity $\zeta_m$, then the same holds for all of them, as they are conjugate over $\Q$.
Since $\varphi(3)=2$, $\varphi(4)=2$, $\varphi(5)=4$,
condition (B) ($4$th roots of unity) gives $2$ linear equations, (C) ($3$rd roots of unity) gives $2$,
and (D) ($5$th roots of unity) gives $4$.

\subsubsection{Conditions at \texorpdfstring{$t=1$}{t=1}}
The conditions (A1) and (A2) also give linear equations.
Since $f(1)=P(1)$, the condition (A1) is
\[
c_0+c_1+\cdots+c_N=1 .
\]
Since $f'(t)=rt^{r-1}P(t)+t^rP'(t)$, we have $f'(1)=rP(1)+P'(1)$,
which contains the products of $r$ and $c_k$.
However, using $P(1)=1$ from (A1), the condition (A2) becomes
\[
r+\sum_{k=1}^{N} k\,c_k=0 ,
\]
which is linear in $c_0,\dots,c_N$ and $r$.
Adding these $2$ equations, the number of linear equations obtained from (A1),(A2),(B),(C),(D) is in total
\[
2+2+2+4=10.
\]

\subsubsection{Reduction modulo \texorpdfstring{$60$}{60}}
For $m=3,4,5$, let $r_m\in\{0,1,\dots,m-1\}$ denote the residue of $r$ modulo $m$.
As we saw above, the conditions (B), (C), (D) depend on $r$ only through
$r_4$, $r_3$, $r_5$, respectively.
Since $\operatorname{lcm}(3,4,5)=60$, they depend on $r$ only through $r \bmod 60$.
Thus we write
\[
r=60n+R, \qquad n\in\Z, \quad 0\le R<60 .
\]
By the Chinese remainder theorem, $R$ is the unique integer with $0\le R<60$ and
\[
R\equiv r_3 \pmod 3, \qquad R\equiv r_4 \pmod 4, \qquad R\equiv r_5 \pmod 5,
\]
and the map $R\mapsto(r_3,r_4,r_5)$ gives a bijection between $\{0,1,\dots,59\}$ and
$\{0,1,2\}\times\{0,1,2,3\}\times\{0,1,2,3,4\}$.
Therefore, by checking all $3\cdot4\cdot5=60$ triples $(r_3,r_4,r_5)$,
equivalently all $60$ values of $R$, together with $n\in\Z$, we cover all $r\in\Z$.
In particular, the condition (A2) becomes
\[
60n+R+\sum_{k=1}^{N} k\,c_k=0,
\]
which is linear in $c_0,\dots,c_N$ and $n$.

\subsection{Algebraic computation with SymPy and its mathematical rigor}\label{subsec:AlgComp}

The analysis and exhaustive search of the solution space in this paper are carried out by algorithms implemented with SymPy \cite{Meurer2017},
a Python package for algebraic computation.
The search for each $N$ is implemented by the corresponding SymPy script
\cite{Span6,Span7,Span8,Span9,Span10,Span11}.

The mathematical rigor of the computation rests on the following four points.

\begin{enumerate}

  \item
  \textbf{Exact computation over $\Q$}:
  We do not use approximate numerical computation or numerical search with floating-point numbers.
  The construction of the linear systems, operations on coefficient matrices, Gaussian elimination,
  and polynomial operations are done as exact symbolic computation over the field of rational numbers $\Q$.
  In the final direct check at special values too, the needed algebraic numbers are handled exactly.

  \item\label{itm:4.2(2)}
  \textbf{Covering all integer roots by the rational root theorem}:
  In the search for integer roots of a polynomial $g(n)\in\Zn$ with integer coefficients,
  we do not do a numerical search that tries integers in a finite range.
  For a nonzero polynomial with integer coefficients whose constant term is not $0$,
  we use the fact that, by the rational root theorem, every integer root divides the constant term.
  When the constant term is $0$, we divide $g(n)$ by the highest power of $n$ dividing it,
  add $0$ to the candidates, and then apply the rational root theorem to the remaining polynomial.
  We then check all of the resulting candidates by exact substitution, so
  we can find all integer roots,
  without missing any integer root and without accepting any extra root.

  \item
  \textbf{Exact checks of rank, determinant, and so on for the finitely many residue classes}:
  In the code, conditions such as the rank of the coefficient matrix, the determinant, the degree of polynomials,
  and membership in $\Zn$ are checked at runtime.
  For example, in the case $N=8$, we check that the determinant of the $10\times10$ coefficient matrix
  of the linear system in $c_0,\dots,c_8,n$ is exactly $1800$ for all 60 residue classes.
  Also, for $N=10$ we check that the coefficient of $c_{10}$ in $f''(1)$, after substituting the solution of the linear stage, is
  $60$ for all residue classes, and for $N=11$ that the determinant of the
  $2\times2$ coefficient matrix for (A3),(A4) is $10800$ for all residue classes.

  \item
  \textbf{Higher derivatives based on the Leibniz formula, and independent checks by direct differentiation}:
  To evaluate the higher derivatives $f^{(k)}(1)$, we do not expand them by hand for each order;
  we use a general computation based on the Leibniz formula.
  Concretely, we use the fact that, with the falling factorial
  \[
  (r)_j=r(r-1)\cdots(r-j+1),
  \]
  we have
  \[
  f^{(k)}(1)
  =
  \sum_{j=0}^{k}
  \binom{k}{j}
  (r)_j
  P^{(k-j)}(1) .
  \]

  Moreover, for each solution obtained, we do not reuse this formula;
  instead, we construct the actual Laurent polynomial
  \[
  f(t)=t^rP(t),
  \]
  differentiate it directly with the SymPy function \texttt{diff},
  and check the conditions at $t=1,i,\omega,\zeta$ again independently.

\end{enumerate}

\section{Search for solutions and the algorithm}\label{sec:algorithm}

In this section, for the Laurent polynomials
$f(t) = t^r P(t)$ with $r \in \Z$ and $P(t) = \sum_{k=0}^N c_k t^k \in \Z[t]$ 
that satisfy the conditions (A1)--(A5) and (B),(C),(D), we explain in detail the process (algorithm) of the search with the computer algebra system SymPy \cite{Meurer2017}.

The procedures described below for each $N=6,\dots,11$ are implemented as the corresponding SymPy scripts
\cite{Span6,Span7,Span8,Span9,Span10,Span11}, and
what follows is a summary of their content.
In particular, the concrete values of ranks, determinants, and coefficients, the degrees of polynomials, and membership in $\Zn$ are not mere claims; they are checked at runtime by \texttt{assert} statements in each script. Numbers such as the numbers of integer solutions are given as the output of each script.

\bigskip

Here we make a remark on the criteria for classifying the solutions in this search.

A search with the degree bound $N$ includes all solutions whose span is at most $N$.
Therefore, lower-span solutions with $c_N=0$ are also included in the search.

In terms of the coefficients, the actual span of $f(t)$ is given by 
\[
\operatorname{span}(f(t)) = \max\{k\mid c_k\ne0\} - \min\{k\mid c_k\ne0\} .
\]

The criteria for classifying the solutions in this search are as follows.

\begin{itemize}
  \item
  \textbf{Trivial solution}:
  a solution with $f(t)=1$. This corresponds to a monomial of span $0$. Note that any solution of span $0$ is trivial by (A1) and (A2).

  \item
  \textbf{Genuine span-$N$ solution}:
  a solution that satisfies $c_0\ne0$ and $c_N\ne0$, that is, whose actual span is exactly $N$.

  \item
  \textbf{Lower-span solution}:
  a solution that is nontrivial and whose actual span is less than $N$.
\end{itemize}

\subsection{Basic plan of the solution algorithm}

The search for solutions of the system of equations is done in the following two stages.

\begin{enumerate}

  \item
  \textbf{Linear stage}:
  For the 10 linear equations in total obtained from (A1), (A2), (B), (C), (D),
  we include $n$ as one of the symbolic unknowns together with $c_0 , \dots , c_N$, and analyze the solutions with the SymPy function \texttt{linsolve}.

  \item
  \textbf{Narrowing down the integer solutions by the higher derivative conditions}:
  For the free variables not determined in the linear stage,
  we apply (A3), (A4), (A5) one after another as far as needed.
  When the remaining condition becomes a polynomial equation in $n$ with integer coefficients, we find its integer roots exhaustively by the rational root theorem (see item~(\ref{itm:4.2(2)}) in Section~\ref{subsec:AlgComp}).

\end{enumerate}

More precisely,
\begin{itemize}
  \item for $N=6,7,8$ we use up to (A3).
  \item for $N=9,10$ we use up to (A4).
  \item for $N=11$ we use up to (A5).
\end{itemize}

As we will see below, for $N=6,7,8$, all solutions that pass (A3) are $f(t)=1$, so (A4),(A5) are satisfied automatically.
Also, for $N=9,10$, all solutions that finally pass (A4) are
$f(t)=1$, so (A5) is also satisfied automatically.

\subsection{Search in the overdetermined and square systems (\texorpdfstring{$N=6,7,8$}{N=6,7,8})}

In the range $N\le8$, there are 10 linear equations,
and the number of unknowns, the coefficients $c_0,\dots,c_N$ together with $n$, is at most 10.
Therefore, overdetermined systems or square systems appear in the linear stage.

\subsubsection{The case \texorpdfstring{$N=6$}{N=6}: the concrete system of equations}

In the case $N=6$, we have
\[P(t)=c_0+c_1t+\cdots+c_6t^6\]
and we put $r=60n+R$ for $f(t) = t^r P(t)$.

The conditions (A1),(A2),(B),(C),(D) give 10 linear equations in total.
Here, to exhibit the concrete form of the system of equations, we explain the case of the residue class
\[ r_3=r_4=r_5=0, \qquad R=0, \qquad r=60n .\]

In this case, (A1) gives the equation
\[c_0+c_1+c_2+c_3+c_4+c_5+c_6=1 . \]

Also, (A2) gives
\[60n+c_1+2c_2+3c_3+4c_4+5c_5+6c_6=0 . \]
Here we used (A1) to put $P(1)=1$ as in Section~\ref{sec:sympy-rigor}.

The condition (C) at a primitive 3rd root of unity gives
\[\begin{aligned} c_0+c_3+c_6-c_2-c_5&=1,\\ c_1+c_4-c_2-c_5&=0. \end{aligned}\]

The condition (B) at a primitive 4th root of unity gives
\[\begin{aligned} c_0-c_2+c_4-c_6&=1,\\ c_1-c_3+c_5&=0. \end{aligned}\]

The condition (D) at a primitive 5th root of unity gives
\[\begin{aligned} c_0-c_4+c_5&=1,\\ c_1-c_4+c_6&=0,\\ c_2-c_4&=0,\\ c_3-c_4&=0. \end{aligned}\]

Therefore, the linear system (10 equations) for this residue class is concretely
\[\left\{ \begin{aligned} c_0+c_1+c_2+c_3+c_4+c_5+c_6 &= 1,\\ 60n+c_1+2c_2+3c_3+4c_4+5c_5+6c_6 &= 0,\\ c_0+c_3+c_6-c_2-c_5 &= 1,\\ c_1+c_4-c_2-c_5 &= 0,\\ c_0-c_2+c_4-c_6 &= 1,\\ c_1-c_3+c_5 &= 0,\\ c_0-c_4+c_5 &= 1,\\ c_1-c_4+c_6 &= 0,\\ c_2-c_4 &= 0,\\ c_3-c_4 &= 0. \end{aligned} \right. \]

For the other 59 residue classes we also obtain linear systems of 10 equations with the same structure,
but the right-hand sides of the root-of-unity conditions change depending on $r_3,r_4,r_5$.
The script \cite{Span6} handles all of these 60 residue classes one by one.

There are 8 unknowns
\[(c_0,c_1,c_2,c_3,c_4,c_5,c_6,n) ,\]
and so the coefficient matrix is a $10\times8$ matrix.

Since only the right-hand sides depend on the residue class, the coefficient matrix is the same for all 60 residue classes, and we have verified that
\[\operatorname{rank}=8 .\]
On the other hand, since there are 10 equations, the linear relations among the rows form a 2-dimensional space.

Since the rank is equal to the number of unknowns, for each residue class the linear system falls into one of the following:

\begin{itemize}
  \item the case where it is inconsistent and has no solution,
  \item the case where it is consistent and has a unique numerical solution
  $(c_0,\dots,c_6,n)$.
\end{itemize}

In the actual computation, we see that 47 of the 60 residue classes are inconsistent,
and the other 13 give a unique numerical solution.

For these 13, we check whether $n$ is an integer,
and further check that each $c_k$ is an integer; then 9 remain.
For these, evaluating (A3)
\[f''(1)=0\]
directly, 7 remain.

These 7 solutions all have actual span 0,
and they also satisfy (A1),(A2), so
each of them is equal to
\[f(t)=1.\]

The computation above is carried out with the script \cite{Span6}.

\subsubsection{The case \texorpdfstring{$N=7$}{N=7}: relaxing the overdetermined system}

The linear equations obtained from (A1),(A2),(B),(C),(D) are 10 as in the previous case, but when we add the coefficient $c_7$, the unknown vector has the 9 unknowns
\[(c_0,\dots,c_7,n). \]

As in the case $N=6$, the $10\times9$ coefficient matrix is the same for all 60 residue classes, and we have verified that
\[\operatorname{rank}=9 .\]
Thus the linear relations among the rows now form a 1-dimensional space.
Since the rank is equal to the number of unknowns, each linear system is either inconsistent or has a unique numerical solution.

In the actual computation, more residue classes are consistent than in the case $N=6$:

\begin{itemize}
  \item Inconsistent: 32 of 60.
  \item Consistent with a unique solution: 28 of 60.
\end{itemize}

Of the 28 solutions, 14 satisfy $n\in\Z$ and all $c_k\in\Z$.
Imposing (A3) $f''(1)=0$ on these leaves 8, and all of them are $f(t)=1$.

The computation above is carried out with the script \cite{Span7}.

\subsubsection{The case \texorpdfstring{$N=8$}{N=8}: moving to a square system}

When we add the coefficient $c_8$, the unknown vector
\[(c_0,\dots,c_8,n)\]
has 10 unknowns, which is equal to the number of linear equations, 10.

As in the previous cases, the $10\times10$ coefficient matrix is the same for all 60 residue classes, and we have verified that
\[\det=1800 .\]
Therefore, the coefficient matrix is nonsingular,
and there is a unique numerical solution for each residue class.

Of these 60 solutions, 29 satisfy $n\in\Z$ and all $c_k\in\Z$.
Imposing (A3) on these leaves 9 integer solutions,
and all of them are the trivial solution $f(t)=1$.
The computation above is carried out with the script \cite{Span8}.

\begin{table}[htbp]
  \centering
  \caption{Linear structure and change in the solutions for the overdetermined and square systems ($N=6,7,8$)}
  \label{tab:overdetermined}
  \vspace{0.5em}
  \small
  \begin{tabular}{p{4.2cm}ccc}
    \toprule
    Item & $N=6$ & $N=7$ & $N=8$ \\
    \midrule
    Coefficient vector
      & $c_0,\dots,c_6$ (7)
      & $c_0,\dots,c_7$ (8)
      & $c_0,\dots,c_8$ (9) \\
    \texttt{linsolve} unknowns
      & $c_0,\dots,c_6,n$ (8)
      & $c_0,\dots,c_7,n$ (9)
      & $c_0,\dots,c_8,n$ (10) \\
    Rank of the coefficient matrix
      & 8
      & 9
      & 10 (square) \\
    Number of equations minus rank
      & 2
      & 1
      & 0 \\
    \midrule
    Residue classes with a consistent solution
      & 13 / 60
      & 28 / 60
      & 60 / 60 \\
    Of these, number with $n,c_k\in\Z$
      & 9
      & 14
      & 29 \\
    Number of solutions passing (A3)
      & 7
      & 8
      & 9 \\
    Genuine span-$N$ solutions
      & 0
      & 0
      & 0 \\
    \bottomrule
  \end{tabular}
\end{table}

Here, in the search for $N=6,7,8$, we impose explicitly only up to (A3).
However, all solutions that passed (A3) were $f(t)=1$.
Therefore, these solutions also satisfy (A4),(A5) automatically.

Hence, for $N=6,7,8$, the only solution that satisfies all the conditions (A1)--(A5),(B),(C),(D) is $f(t)=1$.

\subsection{Appearance of free variables and constraints by the higher derivative conditions (\texorpdfstring{$N=9,10,11$}{N=9,10,11})}

For $N\ge9$, the number of unknowns is larger than the number of linear equations, 10, and free variables appear in the linear stage.
For each $N=9,10,11$, we have verified that the linear system is consistent for all 60 residue classes, and that the free variables are
$n$ for $N=9$, $c_{10}$ and $n$ for $N=10$, and $c_{10},c_{11},n$ for $N=11$.
For these free variables, we apply the higher derivative conditions one after another.

\subsubsection{The case \texorpdfstring{$N=9$}{N=9}: the symbolic parameter \texorpdfstring{$n$}{n} and (A4)}

When $N=9$, there are 11 unknowns $(c_0,\dots,c_9,n)$, and for 10 equations exactly one free variable remains, namely $n$.
Thus, when we solve the 10 equations in the linear stage, each coefficient $c_k$ is uniquely determined as a
linear expression $c_k(n)$ in $n$ with rational coefficients.
We have verified that these linear expressions have integer coefficients for all 60 residue classes, that is,
$c_k(n)\in\Zn$ for $k=0,\dots,9$.
Hence $c_k(n_0)\in\Z$ for every integer $n_0$.

Substituting $c_k(n)$ into (A3) $f''(1)=0$ gives a polynomial equation
\[
g_2(n)=0
\]
in $n$ with integer coefficients, and we have verified that $g_2(n)$ is of degree exactly $2$ for all 60 residue classes.
Finding the integer roots of this equation exhaustively by the rational root theorem
(see item~(\ref{itm:4.2(2)}) in Section~\ref{subsec:AlgComp})
gives 12 integer root candidates in total over all residue classes.

Here, 2 of the 12 candidates satisfy $c_0\ne0$ and $c_9\ne0$, that is, they would give genuine span-9 solutions if they also satisfied (A4).
However, evaluating (A4) for them gives in fact
\[
f'''(1)=\pm360\ne0,
\]
so both are excluded.
Therefore, 2 of the 12 candidates are excluded by (A4),
and 10 solutions remain in the end.
All of these are the trivial solution $f(t)=1$.

The computation above is carried out with the script \cite{Span9}.

\subsubsection{The case \texorpdfstring{$N=10$}{N=10}: determining \texorpdfstring{$c_{10}$}{c10} by (A3)}

When $N=10$, the free variables in the linear stage are the two variables $c_{10}$ and $n$.

\begin{enumerate}

  \item
  \textbf{Determining $c_{10}$ by (A3)}:
  Substituting the expressions of $c_0,\dots,c_9$ in terms of $(c_{10},n)$ obtained in the linear stage into
  $f''(1)=0$ and collecting terms in $c_{10}$,
  we have verified that the coefficient of $c_{10}$ is $60$ for all 60 residue classes, independently of the value of $n$.
  Since this coefficient is a nonzero constant,
  $f''(1)=0$ is a linear equation in $c_{10}$, whose constant term is a polynomial in $n$, and
  $c_{10}$ is uniquely determined as a polynomial $c_{10}(n)$ in $n$ with rational coefficients.
  We have verified that the denominator $60$ cancels exactly, so that $c_{10}(n)\in\Zn$.
  Further substituting $c_{10}(n)$ into the expressions of $c_0,\dots,c_9$, we have also verified that
  $c_k(n)\in\Zn$ for all $k=0,\dots,10$.
  Note that the latter does not follow from the former, since the expressions of $c_0,\dots,c_9$ in terms of $(c_{10},n)$ may have rational coefficients.

  \item
  \textbf{Determining $n$ by (A4)}:
  With these $c_k(n)$, the condition (A4) $f'''(1)=0$
  becomes a polynomial equation
  \[
  g_3(n)=0
  \]
  in $n$ with integer coefficients, and we have verified that $g_3(n)$ is of degree exactly $3$ for all 60 residue classes.

\end{enumerate}

Finding the integer roots of $g_3(n)=0$ exhaustively by the rational root theorem
gives 11 integer roots in total over all residue classes.
Checking them directly, all of them give the trivial solution $f(t)=1$.

The computation above is carried out with the script \cite{Span10}.

\subsubsection{The case \texorpdfstring{$N=11$}{N=11}: determining two variables and (A5)}

When $N=11$, the free variables in the linear stage are the three variables $c_{10}$, $c_{11}$ and $n$.

\begin{enumerate}

  \item
  \textbf{Determining $c_{10},c_{11}$ by (A3),(A4)}:
  The conditions (A3) $f''(1)=0$ and (A4) $f'''(1)=0$ give two equations in $c_{10}$, $c_{11}$ and $n$, which are linear in $(c_{10},c_{11})$ with coefficients and constant terms in $\Q[n]$. We have verified that the determinant of the $2\times2$ coefficient matrix with respect to $(c_{10},c_{11})$ is $10800$ for all 60 residue classes, independently of $n$.
  Since this determinant is a nonzero constant, these two equations
  can be solved uniquely for $(c_{10},c_{11})$, and
  \[
  c_{10}=c_{10}(n),\qquad c_{11}=c_{11}(n)
  \]
  are uniquely determined as polynomials in $n$ with rational coefficients.
  We have verified that $c_{10}(n),c_{11}(n)\in\Zn$, and moreover that $c_k(n)\in\Zn$ for all $k=0,\dots,11$
  after substituting them into the expressions of $c_0,\dots,c_9$.

  \item
  \textbf{Determining $n$ by (A5)}:
  With these $c_k(n)$, the condition (A5) $f''''(1)=0$
  becomes a polynomial equation
  \[
  g_4(n)=0
  \]
  in $n$ with integer coefficients, and we have verified that $g_4(n)$ is of degree exactly $4$ for all 60 residue classes.

\end{enumerate}

Finding the integer roots of $g_4(n)=0$ exhaustively by the rational root theorem
gives 12 integer roots in total over all residue classes.
Checking them directly, all of them give the trivial solution $f(t)=1$.

The computation above is carried out with the script \cite{Span11}.

\begin{table}[htbp]
  \centering
  \caption{Algebraic structure and role of the derivative conditions in the cases
  with free variables ($N=9,10,11$)}
  \label{tab:underdetermined}
  \vspace{0.5em}
  \small
  \begin{tabularx}{\textwidth}{>{\raggedright\arraybackslash}p{3.0cm}*{3}{>{\raggedright\arraybackslash}X}}
    \toprule
    Item & $N=9$ & $N=10$ & $N=11$ \\
    \midrule
    Coefficient vector
      & $c_0,\dots,c_9$ (10)
      & $c_0,\dots,c_{10}$ (11)
      & $c_0,\dots,c_{11}$ (12) \\
    Free variables in the linear stage
      & $\{n\}$ (1)
      & $\{c_{10},n\}$ (2)
      & $\{c_{10},c_{11},n\}$ (3) \\
    \midrule
    Role of (A3)
      & constraint on $n$ (degree 2)
      & determining $c_{10}$ (coefficient 60)
      & determining $c_{10},c_{11}$ with (A4) (determinant 10800) \\
    Role of (A4)
      & excluding candidates
      & constraint on $n$ (degree 3)
      & determining $c_{10},c_{11}$ with (A3) \\
    Role of (A5)
      & not needed
      & not needed
      & constraint on $n$ (degree 4) \\
    \midrule
    Final number of integer solutions
      & 10
      & 11
      & 12 \\
    Genuine span-$N$ solutions
      & 0
      & 0
      & 0 \\
    \bottomrule
  \end{tabularx}
\end{table}

\subsection{Independent re-check}

For all integer solutions obtained, to confirm the correctness of the algebraic processing,
we carried out an independent re-check.
In this re-check, we did not reuse the general derivative function based on the Leibniz formula
used in the search stage; instead we constructed the actual Laurent polynomial $f(t)=t^rP(t)$,
differentiated it directly with the SymPy function \texttt{diff},
substituted $t=1,i,\omega,\zeta$, and evaluated directly the conditions imposed in each case:
\begin{itemize}
  \item for $N=6,7,8$, the 6 conditions (A1)--(A3),(B),(C),(D),
  \item for $N=9,10$, the 7 conditions (A1)--(A4),(B),(C),(D),
  \item for $N=11$, the 8 conditions (A1)--(A5),(B),(C),(D).
\end{itemize}
For all solutions found, these independent checks were successful.

Note that this re-check confirms that the solutions found indeed satisfy the conditions.
The fact that no solution is missed follows from the exhaustiveness of the linear stage
over all 60 residue classes and of the search for integer roots by the rational root theorem.

\subsection{Conclusion}

In the search explained so far, the derivative conditions used are increased step by step depending on $N$.
For $N=6,7,8$, we use up to (A3), and since all solutions obtained at that stage are
$f(t)=1$, (A4),(A5) are also satisfied automatically.
For $N=9,10$, we use up to (A4), and since all solutions obtained in the end are
$f(t)=1$, (A5) is also satisfied automatically.
For $N=11$, we apply up to (A5) explicitly.
As a result, in the range $N\le11$, that is,
in the range $\operatorname{span}(f)\le11$,
there is no nontrivial Laurent polynomial with integer coefficients
that satisfies all the conditions (A1)--(A5),(B),(C),(D).

The search above gives a rigorous computer-assisted proof in the sense explained in Section~\ref{subsec:AlgComp},
and this proves Theorem~\ref{thm:poly}.

Finally, we complete the proof of Theorem~\ref{thm:main}.
Suppose that a knot $K$ admits purely cosmetic surgery.
Then, by \eqref{eq:j1}, \eqref{eq:j2}, \eqref{eq:j3}, \eqref{eq:cs1}, \eqref{eq:cs2} and Lemma~\ref{lem:arf_i},
its Jones polynomial $V_K(t)$ satisfies the conditions (A1)--(A5),(B),(C),(D).
Hence, if $\operatorname{span}(V_K(t))\le 11$, then $V_K(t)=1$ by Theorem~\ref{thm:poly}.
This completes the proof of Theorem~\ref{thm:main}.

\section{Nontrivial solutions for span 12}\label{sec:span12}

In the previous sections, we confirmed that there is no nontrivial solution with span at most 11.
In this section, we see that nontrivial solutions actually appear for $N=12$.

\subsection{Search method}

Let $N=12$ and consider
\[
P(t)=c_0+c_1t+\cdots+c_{12}t^{12}, \qquad f(t)=t^rP(t),
\]
with $r=60n+R$ as before.
By the same method as in Section~\ref{sec:algorithm}, using the conditions (A1)--(A5),(B),(C),(D),
the coefficients $c_k$ are uniquely determined as polynomials in $n$ with integer coefficients for each residue class $R$.
In particular, every integer $n$ gives an integer solution.

We computed these solutions for $n\in[-30,30]$ and all 60 residue classes, that is, for $60\times61=3660$ cases.
Among them, 3647 solutions have actual span exactly $12$, that is, satisfy $c_0\ne0$ and $c_{12}\ne0$.
We compared $\max_{0\le k\le12}\vert c_k\vert$ for these 3647 solutions.
The correctness of the solution given below does not depend on this search, since we verify it directly in the following subsections.

\subsection{Solution with the smallest coefficients}

One of the solutions obtained corresponds to $r=1$, that is, $R=1$ and $n=0$
(so $(r_3,r_4,r_5)=(1,1,1)$), and its coefficients are
\[
(c_0,c_1,\ldots,c_{12}) = (3,-4,5,-6,6,-7,7,-6,6,-5,4,-3,1).
\]
Therefore
\[\begin{aligned} P(t) ={}&t^{12}-3t^{11}+4t^{10}-5t^9+6t^8-6t^7+7t^6-7t^5\\ &\quad+6t^4-6t^3+5t^2-4t+3, \end{aligned}\]
and
\[\begin{aligned} f(t) ={}&t^{13}-3t^{12}+4t^{11}-5t^{10}+6t^9-6t^8+7t^7-7t^6\\ &\quad+6t^5-6t^4+5t^3-4t^2+3t. \end{aligned}\]

For this solution, we have
\[
\sum_{k=0}^{12}c_k=1, \qquad \max_k\vert c_k\vert=7, \qquad \sum_{k=0}^{12}\vert c_k\vert=63.
\]
Since $c_0=3\ne0$ and $c_{12}=1\ne0$, the actual span of $f(t)$ is $12$.

Among the 3647 solutions of actual span $12$ found in this finite search (with $n\in[-30,30]$), this solution has the smallest value of $\max_k\vert c_k\vert$.
The only other solution with the same value is its mirror image $f(t^{-1})$, which corresponds to $r=-13$, that is, $R=47$ and $n=-1$.

\subsection{Independent check of the derivative conditions}

For this solution, we check (A1)--(A5) by differentiating the Laurent polynomial $f(t)$ directly.
We have
\[\begin{aligned} f'(t) ={}&13t^{12}-36t^{11}+44t^{10}-50t^9+54t^8-48t^7\\ &\quad+49t^6-42t^5+30t^4-24t^3+15t^2-8t+3, \end{aligned}\]
\[\begin{aligned} f''(t) ={}&2\bigl( 78t^{11}-198t^{10}+220t^9-225t^8+216t^7\\ &\qquad\quad-168t^6+147t^5-105t^4+60t^3-36t^2+15t-4 \bigr), \end{aligned}\]
\[\begin{aligned} f'''(t) ={}&6\bigl( 286t^{10}-660t^9+660t^8-600t^7+504t^6\\ &\qquad\quad-336t^5+245t^4-140t^3+60t^2-24t+5 \bigr), \end{aligned}\]
\[\begin{aligned} f''''(t) ={}&24\bigl( 715t^9-1485t^8+1320t^7-1050t^6+756t^5\\ &\qquad\quad-420t^4+245t^3-105t^2+30t-6 \bigr). \end{aligned}\]
Substituting $t=1$ gives
\[
f(1)=1, \qquad f'(1)=f''(1)=f'''(1)=f''''(1)=0.
\]
Therefore, (A1)--(A5) all hold.

\subsection{Check of the root-of-unity conditions}

Recall that the minimal polynomials of $\omega$, $i$, $\zeta$ are the cyclotomic polynomials
\[
\Phi_3(t)=t^2+t+1,\qquad
\Phi_4(t)=t^2+1,\qquad
\Phi_5(t)=t^4+t^3+t^2+t+1 .
\]
Dividing $f(t)$ by these polynomials, we have
\[
\begin{aligned}
f(t)
={}&(t^2+t+1)
\bigl(t^{11}-4t^{10}+7t^9-8t^8+7t^7-5t^6\\
&\qquad\qquad\quad
+5t^5-7t^4+8t^3-7t^2+4t-1\bigr)+1,
\end{aligned}
\]
\[
\begin{aligned}
f(t)
={}&(t^2+1)
\bigl(t^{11}-3t^{10}+3t^9-2t^8+3t^7-4t^6\\
&\qquad\qquad\quad
+4t^5-3t^4+2t^3-3t^2+3t-1\bigr)+1,
\end{aligned}
\]
\[
\begin{aligned}
f(t)
={}&(t^4+t^3+t^2+t+1)
\bigl(t^9-4t^8+7t^7-9t^6+11t^5\\
&\qquad\qquad\quad
-11t^4+9t^3-7t^2+4t-1\bigr)+1.
\end{aligned}
\]
That is, $f(t)\equiv 1 \pmod{\Phi_m(t)}$ for $m=3,4,5$.
Since $\Phi_3(\omega)=\Phi_4(i)=\Phi_5(\zeta)=0$, we obtain
\[
f(\omega)=f(i)=f(\zeta)=1 .
\]
Therefore, (B), (C), (D) all hold.

\begin{remark}
The solution above and its mirror image are excluded by another known condition.
By Daemi--Lidman--Miller Eismeier \cite[Corollary 1.4]{DLME}, the Alexander polynomial of a knot $K$ admitting purely cosmetic surgery is trivial,
and so $\vert V_K(-1)\vert=\det(K)=1$ (see also \cite{KanenobuKishimotoSumi}).
On the other hand, the solution above satisfies $f(-1)=-63$.
Hence it cannot be the Jones polynomial of a knot admitting purely cosmetic surgery.
This shows that the conditions (A1)--(A5),(B),(C),(D) alone are not enough for span 12,
and that further conditions, such as $\vert f(-1)\vert=1$, are needed to extend our method.
We leave this to future work.
\end{remark}

\section*{Statement on the use of AI}

In this study, AI (Claude, ChatGPT, Gemini) was used to generate the initial versions of the SymPy scripts used in the search (\cite{Span6,Span7,Span8,Span9,Span10,Span11})
and to check them.
They were also used to make the initial draft of this manuscript.
However, the author has independently checked all of the code, the computational results, and the text, and the author takes full final responsibility for the content of this paper.

\bibliographystyle{amsplain}
\bibliography{PCSjones}

\end{document}